\documentclass[12pt,UKenglish]{amsart}
\usepackage[margin=25 mm,bottom=35mm,footskip=15mm,top=35mm]{geometry}
\usepackage{enumerate,url,amssymb,amsthm,amsmath,xstring,enumitem,hyperref,subcaption,floatrow,setspace}
\usepackage[UKenglish]{babel}
\usepackage[numbers]{natbib}
\usepackage{xcolor}
\usepackage{graphicx}

\usepackage{aliascnt}
\usepackage[capitalise,noabbrev]{cleveref}
\crefname{appsec}{Appendix}{Appendices}
\crefformat{equation}{#2(#1)#3}

\theoremstyle{plain}

\newtheorem{theorem}{Theorem}[section]

\newaliascnt{proposition}{theorem}
\newtheorem{proposition}[proposition]{Proposition}
\aliascntresetthe{proposition}

\newaliascnt{lemma}{theorem}
\newtheorem{lemma}[lemma]{Lemma}
\aliascntresetthe{lemma}

\newaliascnt{claim}{theorem}

\aliascntresetthe{claim}

\newaliascnt{corollary}{theorem}

\aliascntresetthe{corollary}

\newaliascnt{conjecture}{theorem}
\newtheorem{conjecture}[conjecture]{Conjecture}
\aliascntresetthe{conjecture}

\newaliascnt{observation}{theorem}

\aliascntresetthe{observation}

\newtheorem*{question*}{Question}

\theoremstyle{definition}
\newaliascnt{definition}{theorem}

\aliascntresetthe{definition}

\newaliascnt{question}{theorem}

\aliascntresetthe{question}

\newaliascnt{example}{theorem}

\aliascntresetthe{example}

\theoremstyle{remark}
\newtheorem*{remark}{Remark}

\crefname{theorem}{Theorem}{Theorems}
\crefname{proposition}{Proposition}{Propositions}
\crefname{lemma}{Lemma}{Lemmas}
\crefname{claim}{Claim}{Claims}
\crefname{corollary}{Corollary}{Corollaries}
\crefname{conjecture}{Conjecture}{Conjectures}
\crefname{observation}{Observation}{Observations}
\crefname{definition}{Definition}{Definitions}
\crefname{question}{Question}{Questions}
\crefname{example}{Example}{Examples}

\Crefname{theorem}{Theorem}{Theorems}
\Crefname{proposition}{Proposition}{Propositions}
\Crefname{lemma}{Lemma}{Lemmas}
\Crefname{claim}{Claim}{Claims}
\Crefname{corollary}{Corollary}{Corollaries}
\Crefname{conjecture}{Conjecture}{Conjectures}
\Crefname{observation}{Observation}{Observations}
\Crefname{definition}{Definition}{Definitions}
\Crefname{question}{Question}{Questions}
\Crefname{example}{Example}{Examples}

\newcommand{\xqed}[1]{%
	\leavevmode\unskip\penalty9999 \hbox{}\nobreak\hfill
	\quad\hbox{\ensuremath{#1}}}
\newcommand{\Endofdef}{\xqed{\lozenge}}

\title[Monochromatic paths in hypercubes]{Monochromatic Paths and a Topological Approach to Norine's Conjecture}

\author[Adam~D\v{z}avoronok]{Adam D\v{z}avoronok}
\address{Department of Applied Mathematics, Charles University, Faculty of Mathematics and Physics, Malostransk\'e n\'am.~25, 118~00 Praha~1, Czech Republic}
\email{adam.dzavoronok@mff.cuni.cz}
\begin{document}
	
\begin{abstract}
Motivated by Norine's conjecture, this paper investigates monochromatic antipodal paths in $2$-edge-coloured hypercubes and simplicial complexes. Our main method relies on a topological criterion applied to triangulated $2$-skeleta. We show that any antipodal colouring of a centrally symmetric, simply connected $2$-complex yields a monochromatic path linking an antipodal pair of vertices. By symmetrically triangulating opposite square faces in certain classes of colourings, we obtain a topological verification of Norine's conjecture for these classes. We also establish quantitative bounds for cases in which only a limited number of square faces present structural obstructions.
\end{abstract}

	\maketitle

\section{Introduction}

Norine's conjecture concerns edge-colourings of the hypercube that are compatible with the antipodal symmetry. Let \(Q_n\) be the graph on \(\{0,1\}^n\), where two vertices are adjacent if they differ in exactly one coordinate, and let
\[
    \tau(u_1,\dots,u_n)=(1-u_1,\dots,1-u_n)
\]
be the antipodal involution. We call \(\tau(v)\) the \emph{antipode} of \(v\). A \(2\)-\emph{edge-colouring} of \(Q_n\) is a function \(E(Q_n)\rightarrow \{\textcolor{red}{red},\textcolor{blue}{blue}\}\). It is \emph{antipodal} if opposite edges receive opposite colours, that is, if the colours of \(uv\) and \(\tau(u)\tau(v)\) are different for every edge \(uv\). A \emph{geodesic} is a shortest path between two vertices.

Norine conjectured that every antipodal \(2\)-edge-colouring of \(Q_n\) contains a monochromatic path from some vertex to its antipode. This conjecture is closely related to several stronger or more flexible formulations. We shall consider the following family of conjectures.

\begin{conjecture}\label{NorineConj}
    Let \(Q_n\) be an \(n\)-dimensional hypercube. Then:
    \begin{enumerate}
        \item (Norine~\cite{Norine}) In any antipodal \(2\)-edge-colouring, there exists a monochromatic path connecting some vertex to its antipode.
        \item In any antipodal \(2\)-edge-colouring, there exists a monochromatic geodesic connecting some vertex to its antipode.
        \item (Feder--Subi~\cite{FederSubi}) In any \(2\)-edge-colouring, there exists a path connecting some vertex to its antipode with at most one colour switch.
        \item (Leader--Long~\cite{LeaderLong}) In any \(2\)-edge-colouring, there exists a geodesic connecting some vertex to its antipode with at most one colour switch.
     \end{enumerate}
\end{conjecture}

All conjectures have been verified in small dimensions. Feder and Subi~\cite{FederSubi} proved the result for \(n\le 5\), West and Wise~\cite{WestWise} handled \(n=6\), Frankston and Scheinerman~\cite{FrankstonScheinerman} proved the case \(n=7\) using SAT solvers, and Kirchweger, Peitl, Subercaseaux, and Szeider~\cite{kirchweger} extended this to \(n=8\). There has also been progress on quantitative variants. Leader and Long~\cite{LeaderLong} proved that every \(2\)-edge-colouring of \(Q_n\) contains a monochromatic geodesic of length \(\lceil n/2\rceil\), which gives an antipodal geodesic with at most \(n/2\) colour switches. Dvo\v{r}\'ak~\cite{Dvorak} improved this to \(\frac{3}{8}n+o(n)\) colour switches, Kirchweger et al.~\cite{kirchweger} further improved the bound to \(0.3125n+O(1)\), and Hollom~\cite{hollom2026hypercubegeodesicscolourchanges} recently obtained \((\frac{\pi}{2}+o(1))\sqrt n\) colour switches.

\subsection*{Preliminaries}
A finite \emph{simplicial complex} $\Delta$ is a collection of finite sets, called simplices, closed under taking subsets. We write $|\Delta|$ for its geometric realisation. We realise all complexes in sufficiently high dimensional Euclidean space to avoid any self-intersections. The graph formed by the vertices and edges of $\Delta$ is called its $1$-skeleton. The $2$-skeleton of $\Delta$ is obtained by also adding all $2$-simplices, that is, all filled triangles. Similarly, a \emph{square complex} is a finite $2$-dimensional cell complex whose $2$-cells are combinatorial squares attached along cycles of length four. We write $|K|$ for the underlying topological space of a square complex $K$.  We denote by $Q_n^{(2)}$ the square complex obtained from $Q_n$ by filling all its $4$-cycles. Throughout, by a \emph{polyhedron} we mean a topological space homeomorphic to the geometric realisation of a finite simplicial complex. This also covers the realisation of square complexes.

A simplicial or square complex is called \emph{centrally symmetric} if it is equipped with a fixed-point-free involution $\tau$ on its vertices which extends to an involution of the whole complex. We always assume that no simplex, and no square face, contains both a vertex $v$ and its antipode $\tau(v)$. In particular, $\tau$ acts freely on the underlying space. An edge-colouring of the $1$-skeleton is called \emph{antipodal} if the edges $uv$ and $\tau(u)\tau(v)$ receive opposite colours.

If $(X,\tau)$ and $(Y,\nu)$ are spaces with fixed-point-free involutions, a continuous map $f:X\to Y$ is called $\mathbb Z_2$-equivariant if \(f\circ \tau=\nu\circ f\). In this paper, the target space will usually be the circle $S^1$ equipped with the antipodal involution $x\mapsto -x$.

\subsection*{Our contribution} The purpose of this paper is to develop a topological approach to Norine's conjecture. The guiding observation is that square faces of the hypercube encode the local obstruction to finding monochromatic antipodal paths. If a square has two opposite vertices that are already connected by a monochromatic path, then one may think of the corresponding diagonal as being available in that colour. Adding such diagonals turns suitable square complexes into centrally symmetric simplicial complexes, to which equivariant topology machinery can be applied. We first prove the following general topological criterion for simplicial complexes.

\begin{theorem}\label{thm:triangulated_squares}
 Let \(\Delta\) be a centrally symmetric simplicial complex such that \(|\Delta|\) is simply connected. Then every antipodal \(2\)-edge-colouring of the edges of the \(1\)-skeleton of \(\Delta\) contains a monochromatic path connecting some antipodal pair of vertices.
\end{theorem}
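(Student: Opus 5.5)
Suppose, for contradiction, that no monochromatic path joins any vertex $v$ to $\tau(v)$. The plan is to turn this assumption into a $\mathbb Z_2$-equivariant map $|\Delta|\to S^1$. Then we show that such a map cannot exist when $|\Delta|$ is simply connected.

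\textbf{The equivariant map.} Let $R$ be the red subgraph and $B$ the blue subgraph of the $1$-skeleton. Antipodality of the colouring means $\tau$ maps $R$ onto $B$ as graphs. The assumption says that no red component of $R$ contains an antipodal pair, and the same holds for $B$. Choose $\tau$-orbit representatives of the red components. For each pair $\{C,\tau(C)\}$ with $C$ red, note that $\tau(C)$ is a blue component. We build a map $\varphi$ assigning to each vertex a point of $S^1$, possibly after modifying the target, and we use the $2$-cells to control the edges.

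A cleaner route is to work directly with triangles. Take any triangle $\{a,b,c\}$ of $\Delta$. Among its three edges, two share a colour, say $ab$ and $bc$ are red. Then $a$, $b$, $c$ all lie in one red component, possibly with $ac$ blue. The key local fact is that a triangle never "winds". Define $g$ on vertices into $S^1=\{\pm1,\pm i\}$-valued data as follows. Fix, for each vertex $v$, a label $\ell(v)\in\{0,1\}$ that is $\tau$-antisymmetric and chosen via component representatives. Set $f(v)=1$ if the red component of $v$ is a chosen representative and $-1$ otherwise. This is well defined and equivariant precisely because no red component contains $v$ and $\tau(v)$, so $\tau$ maps chosen red components to unchosen ones.

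\textbf{Extending over edges and triangles.} Each edge $uv$ is sent to an arc of $S^1$. If $uv$ is red, then $f(u)=f(v)$ and we use the constant map. If $uv$ is blue, then $\tau(u)\tau(v)$ is red, and we need an arc from $f(u)$ to $f(v)$. When these values differ, we use the upper semicircle if the edge is blue, and the antipodal lower semicircle is then forced on $\tau(u)\tau(v)$. The difficulty is that $\tau(u)\tau(v)$ is red with equal endpoints, so this does not match. I would therefore refine $f$ by working with both colours at once. Map $v$ to $e^{i\theta(v)}$ with $\theta(v)\in\{0,\pi/2,\pi,3\pi/2\}$, where the red-representative bit and the blue-representative bit together encode the quadrant. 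Choose blue representatives as $\tau$ of the unchosen red components, so that the two bits are coupled equivariantly. Each edge then joins quadrants differing in one bit and is sent to a quarter arc. Each triangle's boundary then maps to a loop of winding number $0$, since three quarter-steps cannot sum to $\pm 2\pi$ and the net displacement is $0$. Hence the map extends over $2$-simplices, and by working orbitwise it extends equivariantly, because $\tau$ is free on simplices. Higher skeleta are irrelevant, since $\pi_1(|\Delta|)$ depends only on the $2$-skeleton and the obstruction lives there; I would restrict to $|\Delta^{(2)}|$.

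\textbf{Contradiction.} An equivariant map $F:|\Delta^{(2)}|\to S^1$ on a simply connected space with a free involution is impossible. Since $|\Delta^{(2)}|$ is simply connected, $F$ lifts to $\tilde F:|\Delta^{(2)}|\to\mathbb R$. The relation $F(\tau x)=-F(x)$ gives $\tilde F(\tau x)=\tilde F(x)+\pi+2\pi k$, with $k$ constant by connectedness. Applying this twice yields $\tilde F(x)=\tilde F(x)+2\pi(2k+1)$, which is a contradiction.

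\textbf{Main obstacle.} The main obstacle is the consistent choice of quadrant labels, so that every edge changes exactly one bit, and its compatibility with $\tau$. This is where the hypothesis that neither colour class connects antipodes must be used in full.
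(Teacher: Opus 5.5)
Your overall plan matches the paper's: build an equivariant map to \(S^1\) from the red/blue component structure, then rule it out by lifting to \(\mathbb R\). Your lifting argument is correct. The gap is the construction of the map, which is the step you yourself flag as the main obstacle, and the two-bit quadrant labelling cannot be made equivariant in general.

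Let \(R_1,\dots,R_m\) be the red components, so that the blue components are \(\tau(R_1),\dots,\tau(R_m)\). Write \(v\in R_{i(v)}\cap\tau(R_{j(v)})\), so that \(i(\tau v)=j(v)\) and \(j(\tau v)=i(v)\). With a chosen set \(\mathcal C\) of red components, your bits are \(r(v)=[R_{i(v)}\in\mathcal C]\) and \(b(v)=[R_{j(v)}\notin\mathcal C]\). Hence \((r,b)(\tau v)=(1-b(v),1-r(v))\). This involution of \(\{0,1\}^2\) fixes \((1,0)\) and \((0,1)\). So every vertex for which both or neither of \(R_{i(v)},R_{j(v)}\) is chosen is sent to the same quadrant point as its antipode.

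No other choice of representatives helps. Since you need one-bit changes to go to quarter arcs, antipodal quadrants must carry complementary bit pairs. Equivariance then forces \(c_{j(v)}=1-c_{i(v)}\) for every vertex \(v\), where \(c_k\) is the red bit of \(R_k\). In other words, the graph \(L\) on \([m]\) with edges \(\{i(v),j(v)\}\) must be bipartite, whereas your hypothesis says exactly that \(L\) has no loops.

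Here is a concrete example. Take the hexagon \(0,\dots,5\) with \(\tau(x)=x+3\), red edges \(01,23,45\) and blue edges \(12,34,50\). There is no monochromatic antipodal path, yet \(L\) is a triangle. This complex is not simply connected, but your construction of the map uses only the absence of monochromatic antipodal paths, so the step genuinely does not follow.

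The missing idea is to give each component its own point rather than a bit. The paper picks distinct points \(p_1,\dots,p_m\) on the open upper semicircle and sets \(F(v)=p_{i(v)}-p_{j(v)}\). Equivariance is then automatic from the swap \(i(\tau v)=j(v)\), \(j(\tau v)=i(v)\), and \(F(v)\neq 0\) is exactly the hypothesis \(i(v)\neq j(v)\). On a triangle, two edges share a colour, so all three vertices lie in one red component \(R_c\) (or symmetrically one blue component). Then \(F(v_k)\cdot p_c=1-p_{j(v_k)}\cdot p_c>0\), so the affine extension over the triangle misses the origin. Normalising gives the equivariant map \(|\Delta_2|\to S^1\) that your lifting argument then excludes.
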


We briefly outline the proof. Suppose, for a contradiction, that no monochromatic antipodal path exists. The red and blue connected components then allow us to construct a continuous \(\mathbb Z_2\)-equivariant map from \(|\Delta|\) to a unit circle \(S^1\), where \(S^1\) is equipped with the antipodal involution. This contradicts the standard Borsuk--Ulam type fact that no such equivariant map exists from a simply connected centrally symmetric complex to \(S^1\).

To apply this criterion to hypercubes and square complexes, we use square faces. A square is called \emph{resolved} if some pair of opposite vertices of the square is connected by a monochromatic path in the ambient hypercube otherwise, it is called \emph{unresolved}. If all square faces of a centrally symmetric square complex are resolved, then we can triangulate each square by a diagonal witnessed by the corresponding monochromatic path. This gives an antipodal colouring of a centrally symmetric simplicial complex, and Theorem~\ref{thm:triangulated_squares} applies.
In particular, we obtain the following hypercube consequence.

\begin{theorem}\label{thm:no_unresolved_squares}
Let $n\ge3$. If an antipodal $2$-edge-colouring of $Q_n$ has no unresolved square faces, then there exists a monochromatic path joining some vertex to its antipode.
\end{theorem}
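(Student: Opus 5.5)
The plan is to build a centrally symmetric simplicial complex $\Delta$ by triangulating every square of $Q_n^{(2)}$ along a diagonal that is witnessed by a monochromatic path. We then apply Theorem~\ref{thm:triangulated_squares} to $\Delta$. Finally we translate the resulting monochromatic antipodal path in $\Delta$ back into $Q_n$.

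\textbf{Step 1: symmetric choice of diagonals.} For any square $S$ of $Q_n$, all its vertices differ in at most two coordinates. A vertex and its antipode differ in $n\ge 3$ coordinates, so $S$ contains no antipodal pair, and in particular $\tau(S)\neq S$. The squares therefore split into orbits $\{S,\tau(S)\}$ of size two, and we pick one representative $S$ from each orbit.

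Since $S$ is resolved, there is a pair of opposite vertices $u,w$ of $S$ joined by a monochromatic path $P$ in $Q_n$, say of colour $c$. If both diagonals are witnessed, we choose one arbitrarily. We add the edge $uw$ with colour $c$ and the two triangles of $S$ determined by this diagonal.

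For $\tau(S)$ we add the diagonal $\tau(u)\tau(w)$ with the opposite colour $\bar c$, together with its two triangles. This is legitimate because $\tau(P)$ joins $\tau(u)$ to $\tau(w)$, and since the colouring is antipodal every edge of $\tau(P)$ has colour $\bar c$. Hence $\tau(P)$ is a monochromatic path of colour $\bar c$. By construction $\tau$ maps diagonals to diagonals with opposite colours, so the extended colouring of the $1$-skeleton of $\Delta$ is antipodal.

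\textbf{Step 2: $\Delta$ is a centrally symmetric simplicial complex.} We check the following points.

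\begin{enumerate}
\item Two vertices at Hamming distance $2$ lie in a unique common square. Hence distinct diagonals are distinct new edges and never coincide with cube edges.
\item Triangles coming from different squares have different vertex sets.
\item Every simplex has all its vertices within Hamming distance $2<n$. Therefore no simplex contains a vertex together with its antipode.
\item $\tau$ permutes the simplices, so $\Delta$ is centrally symmetric in the sense of the preliminaries.
\end{enumerate}

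Moreover, $|\Delta|$ is homeomorphic to $|Q_n^{(2)}|$, since we have only subdivided each square into two triangles.

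\textbf{Step 3: simple connectivity.} The space $|Q_n^{(2)}|$ is the $2$-skeleton of the standard cubical CW structure on the solid cube $[0,1]^n$, which is contractible. The $2$-skeleton of a CW complex has the same fundamental group as the complex itself, so $|Q_n^{(2)}|$ is simply connected, and hence so is $|\Delta|$. This step and Step~2 are where care is needed. I expect the main technical obstacle to be verifying cleanly that the subdivided square complex really is a simplicial complex (no identifications) with the same underlying space, but the Hamming-distance observations above should settle it.

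\textbf{Step 4: conclusion.} By Theorem~\ref{thm:triangulated_squares}, the $1$-skeleton of $\Delta$ contains a monochromatic path, say of colour $c$, from some $v$ to $\tau(v)$. We replace every diagonal edge on this path by its witnessing monochromatic path in $Q_n$. For a representative square this witness is $P$ of colour $c$, and for its partner it is $\tau(P)$ of colour $\bar c$; in either case the witness has the same colour as the diagonal it replaces. This produces a monochromatic walk in $Q_n$ from $v$ to $\tau(v)$. Shortening the walk to a path gives the required monochromatic antipodal path.
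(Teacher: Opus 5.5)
Your proposal is correct and follows essentially the same route as the paper, which applies Proposition~\ref{prop:unresolved_square_subcomplex} with $K=Q_n^{(2)}$. That proof also adds $\tau$-symmetrically chosen witnessed diagonals, invokes Theorem~\ref{thm:triangulated_squares}, and replaces the diagonals by their witnessing monochromatic paths. The differences are minor: you get simple connectivity of $|Q_n^{(2)}|$ from its being the $2$-skeleton of the contractible cube $[0,1]^n$, whereas the paper sketches a word-reduction argument using the square commutation relations, and you check the simplicial-complex and central-symmetry conditions on the triangulation more explicitly than the paper does.
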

This viewpoint is related to a result of Feder and Subi~\cite{FederSubi}, who proved that in any $2$-colouring without alternately coloured $4$-cycles, there is a monochromatic path connecting an antipodal pair. Our Theorem~\ref{thm:no_unresolved_squares} extends this result to a wider class of antipodal colourings.

The next natural step is to make the criterion quantitative. The proof has two main steps. The first is to build on the auxiliary $\mathbb Z_2$ mapping from the proof of Theorem~\ref{thm:triangulated_squares}. We extend it to square complexes with only a few unresolved squares. The second step is to construct a suitable small square complex with appropriate topological properties for the Borsuk-Ulam type argument. Our test complex will be a square subcomplex of $Q_n^{(2)}$ that is formed by gluing two topological discs with antipodal boundaries and $\binom{n}{2}$ squares. By further averaging through translations of this complex, we obtain the quantitative bound.

\begin{theorem}\label{quantitative_feder_subi}
Let $n\ge3$. If an antipodal $2$-edge-colouring of $Q_n$ contains fewer than $3\cdot2^{n-2}$ unresolved squares, then there exists a monochromatic path joining some vertex to its antipode.
\end{theorem}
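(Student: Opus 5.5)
The plan is to combine a relative version of the map from the proof of \cref{thm:triangulated_squares} with a small test complex and an averaging argument. Assume for contradiction that no monochromatic antipodal path exists. Then the red components and the blue components of \(Q_n\) are each disjoint from their antipodal images. As in the proof of \cref{thm:triangulated_squares}, this lets us send each vertex equivariantly to one of four points \(\pm 1,\pm i\in S^1\): the red component of \(v\) decides a sign on one axis and the blue component a sign on the other, chosen consistently with \(\tau\). Every edge then maps to a pair of adjacent points, because its endpoints share a monochromatic component, and we extend along the shorter arc. A resolved square can be triangulated by its witnessing diagonal, whose endpoints also lie in a common monochromatic component. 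Hence the map extends over any resolved square. We obtain an equivariant map \(f:|K^{(1)}|\cup\bigcup\{\text{resolved squares}\}\to S^1\) on any centrally symmetric square subcomplex \(K\subseteq Q_n^{(2)}\).

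The key step is the following statement: if \(K\) is a centrally symmetric square subcomplex with \(|K|\) simply connected, then \(K\) must contain an unresolved square. Otherwise \(f\) is defined on all of \(|K|\), which contradicts the Borsuk--Ulam fact used for \cref{thm:triangulated_squares}. I would then build the test complex \(K_0\) in \(Q_n^{(2)}\) as follows.
\begin{enumerate}
\item Take a disc \(D\) made of squares, for instance a staircase or zonotope-type disc spanned by a monotone family of coordinate directions.
\item Its antipode \(\tau(D)\) is a disc whose boundary is \(\tau(\partial D)\).
\item Glue these two discs along a band of \(\binom n2\) squares that joins \(\partial D\) to \(\tau(\partial D)\) and closes the complex up into a \(\tau\)-invariant, simply connected \(2\)-sphere.
\end{enumerate}
A natural candidate is the boundary of the permutohedral zonotope. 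Its square faces in \(Q_n^{(2)}\) correspond to pairs of coordinates, and its two hemispheres are discs swapped by \(\tau\). The key quantity is \(s=|K_0|_{\text{squares}}\). By the simple-connectivity lemma, every image \(\sigma(K_0)\) of \(K_0\) under a hypercube automorphism that commutes with \(\tau\) contains an unresolved square.

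Now average. The translations \(x\mapsto x\oplus a\) together with the coordinate permutations act transitively on the \(\binom n2 2^{n-2}\) squares of \(Q_n\), and they commute with \(\tau\). Count pairs (automorphism \(\sigma\), unresolved square of \(\sigma(K_0)\)). Each \(\sigma\) contributes at least one such pair. On the other hand, each fixed square lies in \(\sigma(K_0)\) for a fraction \(s/(\binom n2 2^{n-2})\) of all \(\sigma\). Hence the number \(U\) of unresolved squares satisfies \(U\ge \binom n2 2^{n-2}/s\). To reach the bound \(U\ge 3\cdot 2^{n-2}\), we need \(s\le \binom n2/3\) per orbit, which seems far too small.

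So the averaging must be refined, and this is the main obstacle. I would exploit symmetry: unresolvedness is \(\tau\)-invariant, so unresolved squares come in antipodal pairs, and each copy of \(K_0\) must contain an unresolved pair, not just one square. I would also try to use only translations, with \(K_0\) chosen so that it meets each translation class in few squares. Each of the \(\binom n2\) direction-pairs then contributes \(2^{n-2}\) squares, and the required count becomes showing that every translate contains unresolved squares from at least three of the direction classes.

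Concretely, I would aim to prove the following. If a translate of \(K_0\) contained unresolved squares in at most two direction-pair classes, then one could re-route the disc \(D\) within the hypercube to avoid them and still obtain a simply connected symmetric complex, contradicting the key lemma. This flexibility in choosing \(D\), together with translation averaging over \(2^{n}\) translates each containing at least \(3\) unresolved antipodal-pair contributions, would give the stated \(3\cdot 2^{n-2}\). The delicate part is the rerouting argument and making the double counting (each square lying in exactly \(4\) translates of a given placement) precise.
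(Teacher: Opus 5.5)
There is a genuine gap exactly where you flag the main obstacle. Your key lemma (a $\tau$-invariant simply connected $K$ must contain an unresolved square), combined with averaging, cannot reach the stated bound. Take the test complex $E_n=D_n\cup\tau(D_n)$, which has $n(n-1)$ squares. Every copy $gE_n$ contains at least one antipodal pair, and each square lies in $gE_n$ with probability at most $2^{3-n}$. This gives only $|U|\ge 2\cdot 2^{n-3}=2^{n-2}$, a factor $3$ short.

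The rerouting step does not close the gap. Replacing the squares of $D$ in one or two direction classes by other squares gives no control over whether the new squares are resolved. You also give no construction that stays symmetric and simply connected while avoiding every unresolved square.

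The missing idea is to \emph{tolerate} a few unresolved squares rather than avoid them. That is, strengthen your key lemma to Proposition~\ref{prop:2-unresolved}: if $K$ contains at most two antipodal pairs of unresolved squares, a monochromatic antipodal path still exists. The paper uses the vertex map $v\mapsto p_{i(v)}-p_{j(v)}$, where $v\in R_{i(v)}\cap B_{j(v)}$, $\tau(R_k)=B_k$, and $p_1,\dots,p_m$ are distinct points on the open upper semicircle.
\begin{enumerate}
\item An unresolved square is alternately coloured, so its vertices carry labels $(a,b),(a,d),(c,b),(c,d)$ with $a,b,c,d$ pairwise distinct.
\item By Lemma~\ref{lemma:4 point ordering}, the convex hull of the four images avoids $0$ unless $\{p_a,p_c\}$ and $\{p_b,p_d\}$ interlace. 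The antipodal square gives the same condition.
\item Under a uniformly random assignment of indices to the points, each condition fails with probability $1/3$. So two antipodal pairs can be handled at once, and the map extends bilinearly over them, contradicting Lemma~\ref{lem:no-equivariant-map}.
\end{enumerate}
Hence every copy $gE_n$ must contain at least three antipodal pairs, i.e.\ at least $6$ unresolved squares. The expected number is at most $|U|\,2^{3-n}$, which gives $|U|\ge 3\cdot 2^{n-2}$. So your target of ``three antipodal pairs per translate'' is the right count. Translations alone would even suffice, since each square lies in exactly $8$ translates of $E_n$. But that count comes from the freedom in the map, not from rerouting.

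Two smaller points. First, your four-point vertex map is not well defined in general. Equivariance forces opposite signs on the red component of $v$ and the red component of $\tau(v)$, for every $v$. That is a proper $2$-colouring of the graph joining these components, and nothing in your hypotheses provides one. For the $6$-cycle with $\tau(x)=x+3$ and red edges $01,23,45$, the three red components form a triangle. The map $v\mapsto p_{i(v)}-p_{j(v)}$ needs no such choice, and its free ordering is exactly what the quantitative step exploits. Second, no connecting band is needed. For the staircase disc $D_n$, the boundary cycle is itself $\tau$-invariant and $D_n\cap\tau(D_n)=\partial D_n$, so $E_n$ is simply connected by van Kampen.
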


Hence, any counterexample to Norine's conjecture must contain at least $3\cdot2^{n-2}$ unresolved square faces.

\section{Monochromatic paths in simplicial complexes}\label{sec:topological-criterion}
The goal of this section is to prove \cref{thm:triangulated_squares}. We first recall the following result about simply connected simplicial complexes.
\begin{lemma}\label{lem:no-equivariant-map}[\cite{Borsuk-Ulam}Proposition~5.3.2]
Let \(X\) be a connected finite polyhedron with a fixed-point-free involution
\(\tau\). If \(X\) is simply connected, then there is no continuous
\(\mathbb Z_2\)-equivariant map \(X\to S^1\), where \(S^1\) has the antipodal
involution.
\end{lemma}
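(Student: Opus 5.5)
We argue by contradiction using covering space theory. Suppose $f\colon X\to S^1$ is continuous and $\mathbb Z_2$-equivariant, so $f(\tau x)=-f(x)$ for all $x$. Since $X$ is a connected finite polyhedron, it is path-connected and locally path-connected, and it is simply connected by hypothesis. We will use the universal covering $p\colon\mathbb R\to S^1$, $p(t)=e^{2\pi i t}$, and the lifting criterion: the map $f$ lifts to a continuous $\tilde f\colon X\to\mathbb R$ with $p\circ\tilde f=f$, because $f_*\pi_1(X)$ is trivial.

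The key step is to compare $\tilde f\circ\tau$ with $\tilde f$. Since $-e^{2\pi i t}=e^{2\pi i(t+1/2)}$, both $\tilde f\circ\tau$ and $\tilde f+\tfrac12$ are lifts of $f\circ\tau=-f$. Their difference $\tilde f(\tau x)-\tilde f(x)-\tfrac12$ is a continuous function $X\to\mathbb Z$. Because $X$ is connected, it is a constant $k$. Thus
\[
\tilde f(\tau x)=\tilde f(x)+\tfrac12+k\qquad\text{for all }x\in X.
\]
Applying this identity twice and using $\tau^2=\mathrm{id}$ gives $\tilde f(x)=\tilde f(x)+1+2k$. Hence $2k=-1$, which has no integer solution, and this is the contradiction.

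An alternative formulation avoids choosing constants. The function $g(x)=\tilde f(\tau x)-\tilde f(x)$ takes values in $\tfrac12+\mathbb Z$ and satisfies $g(\tau x)=-g(x)$. Since $g$ is continuous and $X$ is connected, $g$ is constant, so $g=-g$ forces $g=0$, which does not lie in $\tfrac12+\mathbb Z$.

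The argument uses freeness of $\tau$ only through the requirement that an equivariant map into $S^1$ with the antipodal action can exist at all. The only nontrivial ingredient is the lifting criterion, whose hypotheses of path-connectedness and local path-connectedness hold because $X$ is a finite polyhedron. There is no real obstacle here; the one point needing care is justifying that the integer-valued difference of two lifts is constant, which follows from connectedness and the discreteness of the fibres.
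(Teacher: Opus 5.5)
Your proof is correct and takes the same route as the paper. Both lift $f$ through the covering $\mathbb R\to S^1$ using simple connectivity, and both observe that $\tilde f\circ\tau-\tilde f$ is continuous with values in the discrete set $\tfrac12+\mathbb Z$ (the paper's ``odd multiple of $\pi$''), hence constant on $X$, which contradicts $\tau^2=\mathrm{id}$. Your write-up is more careful than the paper's sketch, since you explicitly invoke the lifting criterion and check that its hypotheses hold for a finite polyhedron.
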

\begin{proof}
The statement may be viewed as a generalisation of the familiar consequence of the Borsuk–Ulam theorem that there is no antipodal-equivariant map \(S^2\to S^1\). The essential point is that \(|\Delta|\) is simply connected, so any circle-valued map on \(|\Delta|\) can be unwrapped: instead of recording only an angle modulo \(2\pi\), one can choose a continuous real-valued angle. Thus, if an equivariant map \(f:|\Delta|\to S^1\) existed, we could write
\[
f(x)=e^{i\theta(x)}
\]
for some continuous function \(\theta:|\Delta|\to\mathbb R\). Equivariance would then imply that passing from \(x\) to its antipodal point changes this lifted angle by an odd multiple of \(\pi\). Since this change is continuous and takes values in a discrete set, it must be constant on \(|\Delta|\). But applying the involution twice returns to the original point, while the lifted angle would have changed by twice an odd multiple of \(\pi\), a nonzero real number. This shows why no such equivariant map can exist.
\end{proof}

Next, we use an obstruction formulated in terms of $\mathbb Z_2$-equivariant maps. The next proposition extracts such a map from any colouring with no monochromatic antipodal path.
\begin{proposition}\label{prop:existence-equivariant-map}
Let $\Delta$ be as described in \cref{thm:triangulated_squares}, and let $\Delta_2$ be its $2$-skeleton. If there is no monochromatic path connecting a vertex to its antipode, then there exists a $\mathbb Z_2$-equivariant continuous map $g\colon |\Delta_2| \to S^1$, where $S^1$ is equipped with the antipodal involution.
\end{proposition}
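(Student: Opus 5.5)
We construct $g$ directly from the colour classes: first on vertices, then linearly on edges, then extend over each $2$-simplex. For each vertex $v$, let $R(v)$ and $B(v)$ be the vertex sets of the red and blue components containing $v$ (the components of the spanning subgraphs formed by the red, respectively blue, edges). The hypothesis says $\tau(v)\notin R(v)$ and $\tau(v)\notin B(v)$. Antipodality of the colouring gives $\tau(R(v))=B(\tau(v))$ and $\tau(B(v))=R(\tau(v))$, since $\tau$ maps red edges to blue edges and vice versa.

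The plan is to encode each vertex by a point of $S^1$ such that red edges and blue edges both join vertices mapped to points at angular distance less than $\pi$, with $\tau$ corresponding to the antipodal map. A natural first attempt is to choose a set $A$ of vertices with $\tau(A)$ disjoint from $A$ and $A\cup\tau(A)$ everything. We would like $A$ to be a union of red components, so that its complement is a union of blue components. Such a set exists as follows. The red components and blue components of vertices are permuted by $\tau$, red to blue. Take $A$ to be a union of red components, and require that $\tau(A)$, a union of blue components, equals the complement of $A$.

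This requires the complement of a union of red components to be a union of blue components, which is false in general. So instead I would use a genuinely circular encoding. Define $g$ on vertices into four points $1, i, -1, -i$ as follows.
\begin{itemize}
\item Choose a set $S$ of red components containing exactly one of each pair $\{C, \tau'(C)\}$, where $\tau'(C)$ is the red component of $\tau$ of a vertex of $C$. Such a set exists, with $C\neq\tau'(C)$, because $\tau(C)$ is a blue component.
\item Set $g(v)$ to be $1$ if $R(v)\in S$ and $-1$ otherwise. On this pair alone, red edges are constant, while blue edges may jump by $\pi$, which is the problem.
\item Refine with a second coordinate $b(v)\in\{\pm i\}$ chosen analogously for blue components, and set $g(v)=(r(v)+b(v))/|r(v)+b(v)|$, so that $g(v)$ lies among the four diagonal points $e^{i\pi/4}e^{ik\pi/2}$.
\end{itemize}
Equivariance on vertices holds if $r(\tau v)=-r(v)$ and $b(\tau v)=-b(v)$. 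This is exactly where we need the key relation: $r(\tau v)$ is determined by the red component of $\tau(v)$, namely $\tau(B(v))$. So the choices must be coupled. Choose $S$ arbitrarily among red components, then define $b(v)=i\,r'$, where $r'$ is the red-sign of $\tau$ applied to the blue component, i.e. $b(v):=-i\,r(\tau(v))$. One then checks that $b(\tau v)=-b(v)$ requires $r(\tau^2 v)=-r(\tau v)$. Since $r(w)$ and $r(\tau w)$ are opposite whenever the pairing is consistent, this works provided $S$ picks exactly one of $R(w)$ and $R(\tau w)$ for each $w$. This is the main obstacle: $R(w)$ and $R(\tau w)$ are distinct but the relation "$R(\tau w)$" need not be a well-defined involution on red components. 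I would handle it by letting $r$ be defined per vertex via a finite check, or by replacing discrete values with a partition-of-unity average over components. Concretely, I would define $r(v)=\sum_{u\in R(v)}\phi(u)$ style averages and verify nonvanishing.

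Assuming vertex values with $r$ constant on red components and $b$ constant on blue components, every edge joins points differing in only one coordinate. These points are at most $\pi/2$ apart, so we extend $g$ along the shorter arc, equivariantly since $\tau$ maps edges to edges and the arc choice commutes with negation. For a $2$-simplex, its three vertex values pairwise differ by at most one coordinate sign among red and blue. The image of the boundary therefore lies in a closed half-circle and is null-homotopic, so $g$ extends over the triangle. We perform this extension on one triangle of each $\tau$-orbit and define it on the partner by $g(\tau x)=-g(x)$, which is consistent because no simplex contains an antipodal pair. This yields the required equivariant $g:|\Delta_2|\to S^1$.
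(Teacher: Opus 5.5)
\textbf{There is a genuine gap.} It is more serious than the ill-definedness of $\tau'$ that you flag yourself.

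Your final paragraph assumes a vertex map into the four diagonal points with three properties: $r$ is constant on red components, $b$ is constant on blue components, and $g(\tau v)=-g(v)$. Since negation sends $(r,b)$ to $(-r,-b)$, equivariance forces $r(\tau v)=-r(v)$. But then $r$ is also constant on blue components. Indeed, if $u,w$ lie in a common blue component, then $\tau u,\tau w$ lie in the red component $\tau(B(u))$, so $r(u)=-r(\tau u)=-r(\tau w)=r(w)$. Hence $r$ is constant along every edge, and so constant on the $1$-skeleton. The $1$-skeleton is connected because $|\Delta|$ is simply connected, which contradicts $r(\tau v)=-r(v)$.

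So the vertex map your last paragraph works with never exists. This is also why your first attempt with the set $A$ fails: it fails not just ``in general'' but always, for the same reason. Your suggested repair does not help as long as you keep requiring $r(\tau v)=-r(v)$. A function of the form $v\mapsto\sum_{u\in R(v)}\phi(u)$ depends only on $R(v)$, and the same argument shows that any such function that is odd under $\tau$ vanishes identically. Your extension steps over edges and triangles are fine in themselves, but they rest on this nonexistent vertex map.

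\textbf{The missing idea:} do not make the red and blue parts separately odd; let $\tau$ swap them.
\begin{enumerate}
\item Label the blue components canonically as $B_k=\tau(R_k)$. Each vertex then gets a pair $(i(v),j(v))$ with $v\in R_{i(v)}\cap B_{j(v)}$.
\item This labelling gives $(i(\tau v),j(\tau v))=(j(v),i(v))$. The condition $i(v)\ne j(v)$ is exactly the hypothesis that there is no monochromatic antipodal path.
\item Choose distinct unit vectors $p_1,\dots,p_m$ and set $F(v)=p_{i(v)}-p_{j(v)}$. Then $F(\tau v)=-F(v)$ and $F(v)\ne 0$ hold automatically.
\end{enumerate}
This is your coupling $b(v)=-i\,r(\tau v)$ with the factor $i$ dropped and injective values in place of $\pm1$: take $r(v)=p_{i(v)}$ and $b(v)=-r(\tau v)=-p_{j(v)}$.

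Nonvanishing on simplices then comes from a half-plane argument, which replaces your quarter-turn bound. A triangle has two edges of the same colour, so its three vertices share a monochromatic component, say a red component $R_c$. All three values $p_c-p_{j(v_k)}$ then satisfy $x\cdot p_c=1-p_{j(v_k)}\cdot p_c>0$, using $j(v_k)\ne c$. The blue case is symmetric, with $x\cdot p_c<0$, and edges are handled the same way. Hence the affine extension misses the origin, and normalising gives the equivariant map $g$.
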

\begin{proof}
First, note that $|\Delta|$ is simply connected if and only if $|\Delta_2|$ is as well. Let us denote the red components (including singleton vertices) by $R_1,R_2,\dots,R_m$ and the blue components canonically by $B_1,B_2,\dots,B_m$, so that $\tau(R_i)=B_i$. Since each vertex belongs to exactly one red and one blue component, we assign to every vertex $v$ a pair $(i(v),j(v))$ such that $v\in R_{i(v)}\cap B_{j(v)}$. Denote this function by $\kappa:V(\Delta_2)\rightarrow [m]^2$.

We now construct a vertex map $F\colon V(\Delta_2) \to \mathbb{R}^2\setminus\{0\}$. Choose $m$ distinct points $p_1,\dots,p_m$ on the upper open semicircle of the unit circle in $\mathbb{R}^2$ for instance, $p_k=(\cos\theta_k,\sin\theta_k)$ with $0<\theta_k<\pi$. Define
\[
F(v)=p_{i(v)}-p_{j(v)}.
\]
Because $i(v)\neq j(v)$ for all vertices, we have $F(v)\neq 0$. Indeed, if $i(v)=j(v)$, then $v$ and $\tau(v)$ lie in the same red component, contradicting the assumption that there is no monochromatic antipodal path. Moreover, $F$ is equivariant since $(i(\tau(v)),j(\tau(v)))=(j(v),i(v))$.

Next, we show that $F$ extends linearly over each simplex without hitting the origin. First consider an edge. Its endpoints lie in a common monochromatic component, and the same half-plane argument as below shows that the image segment avoids the origin.

Now let $\sigma=\{v_1,v_2,v_3\}$ be a $2$-simplex. Among its three edges, at least two have the same colour, and those two edges already involve all three vertices. Hence either all three vertices lie in the same red component or all three lie in the same blue component. The two cases are symmetric, so assume that all three vertices lie in the same red component $R_c$. Then $i(v_1)=i(v_2)=i(v_3)=c$, and the image points are
\[
F(v_k)=p_c-p_{j(v_k)} \qquad (k\in\{1,2,3\}).
\]
Taking the dot product with $p_c$ gives
\[
(p_c-p_{j(v_k)})\cdot p_c = 1-p_{j(v_k)}\cdot p_c>0,
\]
because the points $p_i$ are distinct unit vectors on the semicircle. Thus all three image points lie in the open half-plane $\{x\in\mathbb R^2 : x\cdot p_c>0\}$, so their convex hull avoids the origin. Consequently, $F$ extends affinely over every simplex of $\Delta_2$ to a continuous map
\[
f\colon |\Delta_2|\to\mathbb R^2\setminus\{0\}.
\]
Moreover, this affine extension is equivariant: if \(x=\sum_r\lambda_r v_r\) in a simplex, then \(\tau(x)=\sum_r\lambda_r\tau(v_r)\) and hence \(f(\tau(x))=-f(x)\). Finally, normalising gives a continuous map
\[
g(x)=\frac{f(x)}{\|f(x)\|}\colon |\Delta_2|\to S^1.
\]
It follows that $g(\tau(x))=-g(x)$ for all $x\in|\Delta_2|$. Hence $g$ is $\mathbb Z_2$-equivariant.
\end{proof}

\begin{proof}[Proof of \cref{thm:triangulated_squares}]
Assume, for a contradiction, that no monochromatic path connects a vertex to its antipode. By \cref{prop:existence-equivariant-map}, there exists a continuous $\mathbb Z_2$-equivariant map $g\colon |\Delta_2|\to S^1$. This contradicts \cref{lem:no-equivariant-map}, because $\Delta_2$ is centrally symmetric and simply connected whenever $\Delta$ is. Hence a monochromatic path connecting an antipodal pair of vertices must exist.
\end{proof}

\section{Monochromatic paths in hypercubes}

In this section we first translate \cref{thm:triangulated_squares} back to the original hypercube problem. This is the point at which the global definition of an unresolved square is used: every resolved square can be triangulated by a diagonal whose colour is witnessed by an existing monochromatic path in the cube. After that we proceed with the proof of Theorem~\ref{quantitative_feder_subi}.

\begin{proposition}\label{prop:unresolved_square_subcomplex}
Let \(n\ge3\), and let \(K\) be a centrally symmetric square subcomplex of the \(2\)-skeleton of \(Q_n\). Suppose that \(|K|\) is simply connected. If an antipodal \(2\)-edge-colouring of \(Q_n\) contains no unresolved square belonging to \(K\), then there exists a monochromatic path connecting an antipodal pair in \(Q_n\).
\end{proposition}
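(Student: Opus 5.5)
We argue by contradiction and reduce to \cref{thm:triangulated_squares}, or more precisely to \cref{prop:existence-equivariant-map} and \cref{lem:no-equivariant-map}. Suppose no monochromatic path in \(Q_n\) joins an antipodal pair. We build a centrally symmetric simplicial complex \(\Delta\) from \(K\) by triangulating every square of \(K\) with one diagonal. We give \(\Delta\) an antipodal colouring whose monochromatic components refine those of the colouring of \(Q_n\).

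\textbf{Choosing diagonals.} Every square \(S=abcd\) of \(K\) is resolved, so some opposite pair, say \(a,c\), is joined by a monochromatic path in \(Q_n\), say red. Add the diagonal \(ac\) and colour it red, which splits \(S\) into triangles \(abc\) and \(acd\). We must choose diagonals equivariantly. The square \(\tau(S)\) is also in \(K\), and \(\tau\) maps the red path from \(a\) to \(c\) to a blue path from \(\tau(a)\) to \(\tau(c)\), since the colouring is antipodal. So we add the diagonal \(\tau(a)\tau(c)\) in \(\tau(S)\) and colour it blue. We process the squares of \(K\) in \(\tau\)-orbits \(\{S,\tau(S)\}\), which are genuine pairs because \(\tau\) is fixed-point-free and no face contains an antipodal pair. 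The resulting complex \(\Delta\) is then centrally symmetric and the extended colouring is antipodal.

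\textbf{Checking hypotheses.} Triangulating squares does not change the underlying space, so \(|\Delta|\cong|K|\) is simply connected. The assumption that no simplex contains both \(v\) and \(\tau(v)\) holds because each triangle lies inside a square of \(K\), and squares of \(Q_n\) never contain antipodes when \(n\ge3\). \(\Delta\) is a genuine simplicial complex: two distinct squares of \(Q_n\) share at most an edge, and a diagonal is never an edge of \(Q_n\), so no two simplices are identified.

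\textbf{Transferring paths.} Every edge of \(\Delta\) of a given colour joins two vertices that are connected by a path of that colour in \(Q_n\). For original edges this is immediate; for a diagonal it is the witness path. Hence every monochromatic path in \(\Delta\) lifts to a monochromatic walk in \(Q_n\), and a walk contains a path. So if \(\Delta\) had a monochromatic antipodal path, \(Q_n\) would have one, contradicting our assumption. On the other hand, \cref{thm:triangulated_squares} applied to \(\Delta\) produces such a path. This contradiction proves the proposition.

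\textbf{Main obstacle.} The delicate step is the equivariant choice of diagonals together with the check that \(\Delta\) is a valid centrally symmetric simplicial complex. In particular we must ensure that no square is its own antipode and that the diagonals chosen in \(S\) and \(\tau(S)\) have opposite colours. Both follow from antipodality of the colouring and the structure of \(Q_n\) for \(n\ge3\). Everything else is bookkeeping.
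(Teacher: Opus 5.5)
Your proof is correct and follows essentially the same route as the paper: you choose diagonals equivariantly on each pair $\{S,\tau(S)\}$, colour them by the witnessing monochromatic paths (opposite colours on the antipodal mate), apply Theorem~\ref{thm:triangulated_squares} to the resulting centrally symmetric triangulation of $|K|$, and lift the antipodal path back to $Q_n$ by replacing each diagonal with its witness path. The differences are presentational only: you phrase the argument by contradiction where the paper argues directly, and you explicitly check that $\Delta$ is a genuine simplicial complex, which the paper leaves implicit.
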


\begin{proof}
Since no square of \(K\) is unresolved, every square \(S\subseteq K\) has a pair of opposite vertices \(a,b\in S\) connected by a monochromatic path in \(Q_n\). For each antipodal pair of squares \(S,\tau(S)\), choose such a pair \(a,b\) in \(S\), add the diagonal \(ab\), and colour it by the colour of a chosen monochromatic path from \(a\) to \(b\). In the opposite square \(\tau(S)\), add the diagonal \(\tau(a)\tau(b)\) and colour it with the opposite colour. Since $n\ge3$, no square is $\tau$-fixed.

This gives an antipodal colouring of the \(1\)-skeleton of a centrally symmetric triangulation \(\Delta\) of \(K\). The underlying space of \(\Delta\) is \(|K|\), hence is simply connected. Therefore \cref{thm:triangulated_squares} gives a monochromatic path in \(\Delta\) joining some vertex \(v\) to \(\tau(v)\). Replacing each added diagonal on this path by the corresponding monochromatic path in \(Q_n\), we obtain a monochromatic walk in \(Q_n\) from \(v\) to \(\tau(v)\). Removing repetitions gives a monochromatic path.
\end{proof}

\begin{proof}[Proof of \cref{thm:no_unresolved_squares}]
Apply \cref{prop:unresolved_square_subcomplex} with \(K=Q_n^{(2)}\). For completeness we sketch that \(|Q_n^{(2)}|\) is simply connected for \(n\ge2\). Every loop in the \(1\)-skeleton of \(Q_n\) corresponds to a word in coordinate flips in which each coordinate occurs an even number of times. The square faces of \(Q_n^{(2)}\) give the commutation relations between flips in distinct coordinates. By commuting equal flips next to each other and cancelling adjacent pairs, every loop contracts. Thus \(|Q_n^{(2)}|\) is simply connected, and the proposition applies.
\end{proof}

As mentioned in the Introduction, the proof of Theorem~\ref{quantitative_feder_subi} has two main steps. We first construct a small topological disc as a square subcomplex of \(Q_n^{(2)}\). We now identify the vertices of \(Q_n\) with subsets of \([n]\), and we write the antipodal map as complementation:
\(
\tau(A)=[n]\setminus A .
\)

For \(1\le i<j\le n\), put
\[
A_{i,j}=\{1,\dots,i-1\}\cup\{j+1,\dots,n\}.
\]
Let \(S_{i,j}\) be the square face of \(Q_n\) with vertices
\[
A_{i,j},\qquad
A_{i,j}\cup\{i\},\qquad
A_{i,j}\cup\{j\},\qquad
A_{i,j}\cup\{i,j\}.
\]
Let \(D_n\) be the square subcomplex consisting of all squares \(S_{i,j}\) with \(1\le i<j\le n\). 

\begin{lemma}\label{sorting_disk}
The complex \(D_n\) is a topological disc. Its boundary is the cycle
\[
\varnothing,\{1\},\{1,2\},\dots,[n],
\{2,\dots,n\},\{3,\dots,n\},\dots,\{n\},\varnothing .
\]
In particular, \(D_n\) is simply connected and has \(\binom n2\) square faces.
\end{lemma}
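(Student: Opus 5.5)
The plan is to recognise \(D_n\) as the standard ``sorting network'' or wiring-diagram disc. I will realise it explicitly as a planar region tiled by \(\binom n2\) rhombi, which is the zonotopal tiling of the \(2n\)-gon with edge vectors \(e_1,\dots,e_n\).

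First I parametrise the squares. For \(S_{i,j}\), the base vertex is \(A_{i,j}=[1,i-1]\cup[j+1,n]\), and the two free coordinates are \(i<j\). Map each vertex \(A\) to the point
\[
\phi(A)=\sum_{k\in A}u_k\in\mathbb R^2,
\]
where \(u_k=(\cos\alpha_k,\sin\alpha_k)\) with \(0<\alpha_1<\dots<\alpha_n<\pi\). Then \(S_{i,j}\) maps to the parallelogram
\[
\phi(A_{i,j})+\{s u_i+t u_j: s,t\in[0,1]\}.
\]
This is the classical tiling of the zonogon \(Z=\sum_k[0,u_k]\) indexed by pairs \(i<j\). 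I will check the tiling property directly, and do it by induction on \(n\). The squares with \(j=n\) have base \(A_{i,n}=[1,i-1]\), so they form a strip \(\{[1,i-1]+s u_i+t u_n\}\) for \(i=1,\dots,n-1\). The remaining squares \(S_{i,j}\) with \(j<n\) all contain \(n\) in their base. They are therefore a translate by \(u_n\) of the copy of \(D_{n-1}\) on the coordinates \(1,\dots,n-1\).

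By induction, \(D_{n-1}\) tiles \(Z_{n-1}\), a convex \(2(n-1)\)-gon. Its boundary runs along the lower chain \(\varnothing,\{1\},\dots,[n-1]\) and along the upper chain through \(\{k,\dots,n-1\}\). The strip of squares with \(j=n\) is the Minkowski band between the lower chain of \(Z_{n-1}\) and its translate by \(u_n\). Gluing this band to \(u_n+Z_{n-1}\) along that translated lower chain gives exactly the convex region \(Z_n\).

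Because the interiors are disjoint and consecutive squares meet along the correct edges, \(\phi\) is injective on \(|D_n|\). Injectivity uses that the vertices are distinct subsets and that the \(u_k\) are generic, which rules out coincidences. Hence \(|D_n|\cong Z_n\) is a closed disc. The boundary of \(Z_n\) consists of the lower chain \(\varnothing,\{1\},\{1,2\},\dots,[n]\), which adds \(u_1,\dots,u_n\) in turn, followed by the upper chain \([n],\{2,\dots,n\},\dots,\{n\},\varnothing\). This is the claimed cycle. Simple connectivity and the count of \(\binom n2\) faces are then immediate.

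The main obstacle is making the gluing step rigorous, namely verifying that the strip squares \([1,i-1]+[0,u_i]+[0,u_n]\) exactly fill the region between the lower chain and its \(u_n\)-translate without overlap. For this I would use the convexity ordering of the angles: the lower chain is a convex polygonal path with increasing slopes, and translating it by a vector \(u_n\) of larger angle sweeps out a band decomposed by the segments \([0,u_n]\) placed at the chain vertices. Alternatively, the argument can be done purely combinatorially. Here one checks that every interior edge of \(D_n\) lies in exactly two squares and every boundary edge in exactly one, and that the link of each vertex is a path or a cycle. Then \(D_n\) is a connected surface with boundary whose Euler characteristic is \(V-E+F=1\) and which has a single boundary cycle, so it is a disc. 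I would use the geometric route as the primary argument.
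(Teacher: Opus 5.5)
Your proof is correct, and it uses the same inductive decomposition as the paper. Both split $D_n$ into the strip of squares $S_{i,n}$ and the squares with $j<n$, which form a copy of $D_{n-1}$ with $n$ added to every vertex. Both glue these along the path $\{n\},\{1,n\},\dots,[n]$.

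Where you differ is in how the disc property is certified. The paper argues purely topologically. The strip is a disc, and the copy is a disc by induction. The two meet exactly in a common boundary arc, which is a combinatorial check: strip squares vary coordinate $n$, while the copy has $n$ fixed. Two discs glued along a boundary arc form a disc, and the boundary cycle is then traced by hand.

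You instead realise $D_n$ as the rhombic tiling of the zonogon $Z_n=\sum_k[0,u_k]$. Then ``disc'' follows from convexity, and the boundary cycle is just the standard boundary of a zonogon. The cost is the band verification, which does go through. Put $\ell(x)=\det(x,u_n)$; then $\ell(u_k)=\sin(\alpha_n-\alpha_k)>0$ for $k<n$. So the lower chain $L$ of $Z_{n-1}$ is a graph over $\ell$, and $(x,t)\mapsto x+tu_n$ is injective on $L\times[0,1]$. Moreover $Z_{n-1}=\{x+tu_n : x\in L,\ 0\le t\le h(x)\}$ for some $h\ge 0$. Hence $Z_n$ is the union of the band ($0\le t\le 1$) and $u_n+Z_{n-1}$ ($1\le t\le 1+h(x)$), and these meet exactly in $u_n+L$. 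In return you get an explicit planar picture, matching the paper's remark on sorting and commutation diagrams, and you avoid quoting the gluing lemma.

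One thing to tidy is injectivity of $\phi$ on $|D_n|$. Deduce it from the induction: $\phi$ is injective on the strip, and injective on the copy of $D_{n-1}$ (image $u_n+Z_{n-1}$) by the inductive hypothesis. The two images meet exactly in $u_n+L$, the image of the shared path, where the two parametrisations agree. Do not rest it on ``consecutive squares meet correctly'' plus genericity of the $u_k$. Genericity is not needed, since any angles $0<\alpha_1<\dots<\alpha_n<\pi$ work. On its own it also would not rule out non-adjacent tiles overlapping.
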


\begin{proof}
We prove by induction that $D_n$ is a disc. For \(n=2\), the complex \(D_2\) consists of one square, so the claim is clear. Suppose \(n\ge 3\). The squares \(S_{i,j}\) with \(j<n\) all have \(n\) in every vertex. After deleting the coordinate \(n\), these squares form a copy of \(D_{n-1}\). By induction, this part is a disc.

The remaining squares are \(S_{i,n}\), where \(1\le i<n\). They form a strip of \((n-1)\) squares between the two paths
\[
\varnothing,\{1\},\{1,2\},\dots,\{1,\dots,n-1\}
\]
and
\[
\{n\},\{1,n\},\{1,2,n\},\dots,[n].
\]
This strip is a disc. It meets the embedded copy of \(D_{n-1}\) exactly along the boundary path
\[
\{n\},\{1,n\},\{1,2,n\},\dots,[n].
\]
Therefore \(D_n\) is obtained by gluing two discs along a boundary path, and is again a disc. Tracing the remaining boundary gives precisely the displayed cycle.
\end{proof}

\begin{remark}
The disc \(D_n\) can be interpreted as the commutation diagram associated with sorting the word \(1,2,\dots,n\) into \(n,n-1,\dots,2,1\). Indeed, the square \(S_{i,j}\) records the elementary commutation between the two crossings involving coordinates \(i\) and \(j\): performing these two independent operations in one order or the other gives the two length-two paths around the square. In this sense, the \(\binom n2\) square faces of \(D_n\) record the \(\binom n2\) inversions that have to be created in order to reverse the word. The boundary cycle is formed by the two monotone extreme sorting paths. Thus, \(D_n\) is a planar disc filling the commutation diagram between these two paths.
\end{remark}

Furthermore, let \(E_n=D_n\cup\tau(D_n)\).

\begin{lemma}\label{lem:En}
For $n \ge3$, the complex \(E_n\) is centrally symmetric and simply connected.
\end{lemma}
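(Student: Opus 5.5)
The plan is to show that \(E_n\) is homeomorphic to the \(2\)-sphere. It is the union of two discs glued along their common boundary circle, which gives simple connectivity at once. Central symmetry is the easy part and I would do it first.

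\emph{Central symmetry.} By construction \(\tau(E_n)=\tau(D_n)\cup\tau^2(D_n)=E_n\), so \(\tau\) restricts to a fixed-point-free involution of the square complex \(E_n\). We also need that no square contains both a vertex and its antipode. The four vertices of a square of \(Q_n\) pairwise differ in at most two coordinates, while \(A\) and \(\tau(A)\) differ in all \(n\ge3\) coordinates. So this condition holds.

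\emph{The two boundaries coincide.} By \cref{sorting_disk}, \(\partial D_n\) is the cycle through all prefixes \(\{1,\dots,k\}\) and all suffixes \(\{k,\dots,n\}\), together with \(\varnothing\) and \([n]\). Complementation sends prefixes to suffixes and vice versa, and swaps \(\varnothing\) and \([n]\). Consecutive vertices on the cycle are sent to consecutive vertices. Hence \(\tau(\partial D_n)=\partial D_n\) as a cycle, and \(\tau(D_n)\) is a disc with the same boundary circle.

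\emph{The intersection is only the boundary.} This is the main step. Every vertex of \(D_n\) has the form \(\{1,\dots,a\}\cup\{b,\dots,n\}\), a ``prefix \(\cup\) suffix'' set. This holds because each of \(A_{i,j}\), \(A_{i,j}\cup\{i\}\), \(A_{i,j}\cup\{j\}\) and \(A_{i,j}\cup\{i,j\}\) is of this form. Complements of such sets are contiguous intervals \(\{a,\dots,b\}\), so every vertex of \(\tau(D_n)\) is an interval. A set that is simultaneously an interval and a prefix \(\cup\) suffix must be \(\varnothing\), \([n]\), a prefix or a suffix, i.e.\ a vertex of \(\partial D_n\). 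Hence \(V(D_n)\cap V(\tau(D_n))=V(\partial D_n)\).

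It remains to rule out a square shared by \(D_n\) and \(\tau(D_n)\), and a shared edge between boundary vertices that is not itself a boundary edge. For squares, I would show that every \(S_{i,j}\) has a vertex that is not an interval:
\begin{itemize}
\item if \(1<i\) and \(j<n\), the vertex \(A_{i,j}\) has a gap at \(i\);
\item if \(i=1<j<n\), the vertex \(\{1\}\cup\{j+1,\dots,n\}\) is not an interval;
\item if \(1<i<j=n\), the vertex \(\{1,\dots,i-1\}\cup\{n\}\) is not an interval;
\item if \(i=1\) and \(j=n\), the vertex \(\{1,n\}\) is not an interval, because \(n\ge3\).
\end{itemize}
For edges, consider two boundary vertices that differ in one element. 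These are two consecutive prefixes, two consecutive suffixes, or \(\varnothing\) or \([n]\) together with a neighbour. Such an edge lies on the boundary cycle, except for pairs like \(\{1\},\{n\}\), which are not adjacent in \(Q_n\) anyway. So the only common cells are those of \(\partial D_n\).

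\emph{Conclusion.} Since \(D_n\cap\tau(D_n)=\partial D_n=\partial\tau(D_n)\), the space \(|E_n|\) is two closed discs glued along their boundary circles by a homeomorphism, and is therefore homeomorphic to \(S^2\). It is thus simply connected. Alternatively, van Kampen applies directly: both pieces are simply connected and the intersection is a circle, which is path-connected. The delicate point I expect to need care with is the intersection computation, in particular the edge check, but the prefix/suffix versus interval dichotomy should handle it cleanly.
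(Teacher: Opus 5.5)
Your proof is correct and follows the paper's route. Both arguments use the dichotomy between prefix\,$\cup$\,suffix sets and intervals on vertices to get $D_n\cap\tau(D_n)=\partial D_n$, then apply Seifert--van Kampen, with central symmetry immediate from the construction. Your extra square and edge checks are valid and sharpen the conclusion to $|E_n|\cong S^2$ with exactly $2\binom n2$ squares, but simple connectivity only needs the intersection to be path-connected, which already follows from the vertex computation together with $\tau(\partial D_n)=\partial D_n$.
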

\begin{proof}
The two discs \(D_n\) and \(\tau(D_n)\) meet exactly in their common boundary. Indeed, every vertex of
$D_n$ has the form $\{1,\ldots,a\}\cup\{b,\ldots,n\}$, and its complement is
the interval $\{a+1,\ldots,b-1\}$. Such an interval is again of this form only
in the boundary cases, namely when it is an initial segment, a final segment,
$\emptyset$, or $[n]$. Hence
\[
D_n\cap \tau(D_n)=\partial D_n .
\]
In particular, the intersection is connected. Since both $D_n$ and $\tau(D_n)$
are simply connected by \cref{sorting_disk}, the Seifert-van Kampen theorem
implies that $E_n$ is simply connected. It is centrally symmetric by
construction. Moreover, this implies that $E_n$ has at most $2\binom n2=n(n-1)$ square faces. 
\end{proof}
We now adapt the construction of the maps $f$ and $g$ from Proposition~\ref{prop:existence-equivariant-map}. There, an important step was to show that $f$ avoids the origin. In the presence of unresolved squares, this is no longer automatic. We will use the freedom in choosing the cyclic order of the points $p_1,p_2,\dots,p_m$ on the semicircle. We record the following elementary lemma.
\begin{lemma}\label{lemma:4 point ordering}
Let $a_1,a_2,b_1,b_2$ be four distinct points on the unit circle. Then
\[
0\in \operatorname{conv}\{a_i-b_j:i,j\in\{1,2\}\}
\]
if and only if the pairs $\{a_1,a_2\}$ and $\{b_1,b_2\}$ interlace on the circle equivalently, up to relabelling and reversal of the cyclic order, the order is $a_1,b_1,a_2,b_2$.
\end{lemma}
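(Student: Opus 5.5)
The plan is to reduce the statement to the elementary fact about crossing chords of a circle. The key observation is a Minkowski-sum identity:
\[
\operatorname{conv}\{a_i-b_j : i,j\in\{1,2\}\} \;=\; [a_1,a_2]-[b_1,b_2] \;=\; \{x-y : x\in[a_1,a_2],\ y\in[b_1,b_2]\},
\]
where $[a_1,a_2]$ and $[b_1,b_2]$ denote the chords (segments) joining the respective points. Once this holds, $0$ lies in the convex hull if and only if some point $x$ of the chord $[a_1,a_2]$ equals some point $y$ of the chord $[b_1,b_2]$, that is, if and only if the two chords intersect.

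\emph{Step 1: the identity.} For the inclusion $\subseteq$, each $a_i-b_j$ lies in $[a_1,a_2]-[b_1,b_2]$, and the Minkowski difference of two convex sets is convex. For $\supseteq$, write $x=\lambda a_1+(1-\lambda)a_2$ and $y=\mu b_1+(1-\mu)b_2$. Then
\[
x-y=\sum_{i,j}\lambda_i\mu_j\,(a_i-b_j),
\]
with $\lambda_1=\lambda$, $\lambda_2=1-\lambda$, $\mu_1=\mu$, $\mu_2=1-\mu$. The coefficients $\lambda_i\mu_j$ are nonnegative and sum to $1$, so $x-y$ is a convex combination of the four points. (Equivalently, the four points form a possibly degenerate parallelogram, since $(a_1-b_1)+(a_2-b_2)=(a_1-b_2)+(a_2-b_1)$.)

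\emph{Step 2: crossing chords.} It remains to show that two chords with four distinct endpoints on the unit circle intersect if and only if their endpoints interlace in the cyclic order. This is the only step needing geometric care, so I expect it to be the main (though mild) obstacle. I would argue via strict convexity of the disc, as follows.
\begin{itemize}
\item The line through $a_1,a_2$ meets the circle only in $a_1,a_2$.
\item It therefore splits the remaining points of the circle into the two open arcs determined by $a_1,a_2$, and these arcs lie in the two opposite open half-planes.
\item If $b_1,b_2$ lie on different arcs (interlacing), the segment $[b_1,b_2]$ crosses the line. The crossing point lies inside the disc, hence strictly between $a_1$ and $a_2$ on that line, so the chords meet.
\item If $b_1,b_2$ lie on the same arc, the whole segment $[b_1,b_2]$ lies in one open half-plane, so it misses $[a_1,a_2]$.
\end{itemize}
Distinctness of the four points guarantees that no $b_j$ lies on the line through $a_1,a_2$, so the dichotomy is clean.

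Combining the two steps gives the equivalence. The restatement ``up to relabelling and reversal, the cyclic order is $a_1,b_1,a_2,b_2$'' is just the definition of interlacing, so nothing further is needed.
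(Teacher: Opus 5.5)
Your proposal is correct and follows essentially the same route as the paper: both reduce the convex-hull condition to the intersection of the chords $[a_1,a_2]$ and $[b_1,b_2]$ using the product weights $\lambda_{ij}=\alpha_i\beta_j$. For the forward direction the paper takes marginals $\alpha_i=\lambda_{i1}+\lambda_{i2}$, $\beta_j=\lambda_{1j}+\lambda_{2j}$, whereas you invoke convexity of the Minkowski difference. Your Step 2 also supplies the half-plane argument for ``chords cross iff endpoints interlace'', which the paper only asserts.
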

\begin{proof}
We first show that the convex-hull condition is equivalent to the intersection of the two chords $a_1a_2$ and $b_1b_2$, which is equivalent to interlacing.

Suppose that $0\in \operatorname{conv}\{a_i-b_j:i,j\in\{1,2\}\}$. Then there are coefficients $\lambda_{ij}\ge 0$ with $\sum_{i,j}\lambda_{ij}=1$ such that
\[
0=\sum_{i,j}\lambda_{ij}(a_i-b_j).
\]
Put $\alpha_i=\lambda_{i1}+\lambda_{i2}$ and $\beta_j=\lambda_{1j}+\lambda_{2j}$. Then $\alpha_1+\alpha_2=\beta_1+\beta_2=1$, and the previous identity gives $\sum_i\alpha_i a_i=\sum_j\beta_j b_j$. Hence the point $x:=\sum_i\alpha_i a_i=\sum_j\beta_j b_j$ lies in both chords $[a_1,a_2]$ and $[b_1,b_2]$. Thus the two chords intersect.

Conversely, suppose that the chords intersect, and choose $x\in [a_1,a_2]\cap [b_1,b_2]$. Then we may write $x=\alpha_1a_1+\alpha_2a_2=\beta_1b_1+\beta_2b_2$, where all coefficients are nonnegative and $\alpha_1+\alpha_2=\beta_1+\beta_2=1$. Define $\lambda_{ij}=\alpha_i\beta_j$. Then $\lambda_{ij}\ge 0$, $\sum_{i,j}\lambda_{ij}=1$, and
\[
\sum_{i,j}\lambda_{ij}(a_i-b_j)=\sum_i\alpha_i a_i-\sum_j\beta_j b_j=x-x=0.
\]
Therefore, $0$ belongs to the convex hull of the four points $a_i-b_j$.
\end{proof}
We now prove a strengthening of Proposition~\ref{prop:existence-equivariant-map}.
\begin{proposition}\label{prop:2-unresolved}
Let $K$ be a simply connected and centrally symmetric square subcomplex of $Q_n^{(2)}$. Suppose that at most two antipodal pairs of squares are unresolved. Then there exists a monochromatic path connecting an antipodal pair in \(Q_n\).
\end{proposition}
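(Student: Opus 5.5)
We argue by contradiction: we assume that no monochromatic path in \(Q_n\) joins an antipodal pair, and we build a \(\mathbb Z_2\)-equivariant map \(|K|\to S^1\). Since \(|K|\) is simply connected, and hence connected, this contradicts \cref{lem:no-equivariant-map}.

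\textbf{Setup.} As in \cref{prop:existence-equivariant-map}, we take the red components \(R_1,\dots,R_m\) of \(Q_n\) and the blue components \(B_k=\tau(R_k)\). We use the labels \(i(v),j(v)\) and the vertex map \(F(v)=p_{i(v)}-p_{j(v)}\), with the points \(p_k\) on the open upper semicircle. Their order along the semicircle is still to be chosen. By assumption \(i(v)\ne j(v)\), so \(F(v)\ne 0\), and \(F\) is equivariant. We treat the two kinds of squares as follows.
\begin{itemize}
\item \emph{Resolved squares.} For each antipodal pair, we add a diagonal coloured by a witnessing monochromatic path, and antipodally coloured in \(\tau(S)\), exactly as in \cref{prop:unresolved_square_subcomplex}. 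The endpoints of such a diagonal lie in one monochromatic component of \(Q_n\), so the half-plane argument of \cref{prop:existence-equivariant-map} shows that \(F\) extends affinely over both triangles without hitting \(0\), whatever the order of the \(p_k\).
\item \emph{Unresolved squares.} We must show that, for a suitable order of the \(p_k\), the origin avoids \(\operatorname{conv}F(S)\) for each unresolved \(S\). We then triangulate \(S\) by an arbitrary diagonal, use the \(\tau\)-image diagonal in \(\tau(S)\), and extend affinely. Since \(F(\tau(v))=-F(v)\), it suffices to check one square from each antipodal pair.
\end{itemize}

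\textbf{Structure of an unresolved square.} Let \(S\) have vertices \(v_1,v_2,v_3,v_4\) in cyclic order. If two consecutive edges of \(S\) had the same colour, the corresponding opposite vertices would be joined by a monochromatic path. Hence the colours alternate, say \(v_1v_2,v_3v_4\) red and \(v_2v_3,v_4v_1\) blue. So \(v_1,v_2\in R_a\), \(v_3,v_4\in R_b\), \(v_2,v_3\in B_c\) and \(v_4,v_1\in B_d\). We get \(a\ne b\), since otherwise \(v_1\) and \(v_3\) would be red-connected, and similarly \(c\ne d\). Moreover \(i(v)\ne j(v)\) gives \(a\ne c\), \(a\ne d\), \(b\ne c\) and \(b\ne d\). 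Thus \(a,b,c,d\) are distinct, and
\[
F(S)=\{p_a-p_c,\;p_a-p_d,\;p_b-p_c,\;p_b-p_d\}.
\]
By \cref{lemma:4 point ordering}, \(0\notin\operatorname{conv}F(S)\) if and only if \(\{p_a,p_b\}\) and \(\{p_c,p_d\}\) do not interlace on the circle. All \(p_k\) lie on an open semicircle, so interlacing on the circle is the same as interlacing in the linear order along the arc.

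\textbf{Choice of order.} This is the key step. There are at most two unresolved antipodal pairs, with representatives giving red index pairs \(\{a,b\}\) and \(\{a',b'\}\). A sufficient condition is that each red pair occupies consecutive positions in the order: the two elements of a pair are then never separated by an element of the other pair, so they cannot interlace with it. The graph on \([m]\) with edges \(ab\) and \(a'b'\) is either a single edge, a matching of two edges, or a path with three vertices. In every case we can list \([m]\) so that both edges join consecutive entries, for example placing the path \(x,y,z\) or the two edges first and the remaining indices afterwards. We then choose the angles \(\theta_k\) increasing in this order.

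\textbf{Conclusion.} With this order, \(F\) extends affinely and equivariantly over the triangulation of \(K\) to a map \(f\colon|K|\to\mathbb R^2\setminus\{0\}\). Normalising, \(g=f/\|f\|\) is equivariant, which contradicts \cref{lem:no-equivariant-map}. The main obstacle is the ordering step; with three or more unresolved pairs the consecutive-pairs trick can fail, and that is why the hypothesis allows only two.
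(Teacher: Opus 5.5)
Your proof is correct and follows the paper's overall strategy. The common steps are: components of $Q_n$, diagonals in resolved squares, the alternating structure of unresolved squares, Lemma~\ref{lemma:4 point ordering}, and the contradiction with \cref{lem:no-equivariant-map}. The one genuine difference is how you choose the order of the points $p_k$.

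\textbf{The paper's choice.} It fixes the positions and takes the order uniformly at random. For a single unresolved square, the red and blue index pairs interlace with probability $1/3$. With at most two antipodal pairs, the union bound gives failure probability at most $2\cdot\frac13<1$.

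\textbf{Your choice.} You construct an explicit order in which each red index pair occupies adjacent positions. This rests on a short case analysis of how $\{a,b\}$ and $\{a',b'\}$ overlap: equal, disjoint, or sharing one index. It also needs the reduction from cyclic interlacing to the linear order along the arc, which you state correctly.

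\textbf{Comparison.} Your route is deterministic and produces a concrete ordering. The paper's route needs no case analysis and is insensitive to how the index sets of different unresolved squares overlap. Both arguments stop at two pairs. As you note, the consecutive-pairs trick cannot handle, say, three red pairs forming a star or a triangle. Likewise, the union bound gives only $3\cdot\frac13=1$.

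Your extension over unresolved squares, by an arbitrary diagonal and affine maps instead of bilinear interpolation, is equally valid. Each triangle's image lies in $\operatorname{conv}F(S)$, and it keeps the whole construction simplicial.
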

\begin{proof}
For each resolved square and its antipodal mate, add the corresponding diagonals in an antipodal way, as in Proposition~\ref{prop:unresolved_square_subcomplex}. Suppose, for a contradiction, that there is no monochromatic path connecting an antipodal pair. Define the component map $\kappa$ as in Proposition~\ref{prop:existence-equivariant-map} for the red and blue components \(R_1,\dots,R_m\) and \(B_1,\dots,B_m\) of the original colouring of \(Q_n\), not just the components of $K$. We choose distinct points $p_1,\dots,p_m$ on the open upper semicircle, together with an ordering $\pi$ of these points, and obtain the corresponding map $F_\pi$.

Consider an unresolved square with vertices labelled $v_{00},v_{10},v_{01},v_{11}$. Its boundary colours must alternate otherwise one of the two length-two paths between opposite vertices would be monochromatic. After possibly exchanging the colours and relabelling the square, we may therefore assume that the horizontal edges are red and the vertical edges are blue. Thus, after relabelling the components if necessary, we have
\[
\kappa(v_{00})=(a,b),\qquad \kappa(v_{11})=(c,d),\qquad
\kappa(v_{10})=(a,d),\qquad \kappa(v_{01})=(c,b).
\]
Unresolvedness gives $a\neq c$ and $b\neq d$. The absence of a monochromatic antipodal path gives $i(v)\neq j(v)$ for every vertex $v$ applied to the four displayed vertices, this gives $a\neq b$, $a\neq d$, $c\neq b$, and $c\neq d$. Hence the four indices $a,b,c,d$ are pairwise distinct.

Fix the positions of the points on the semicircle and choose their order uniformly at random. For one unresolved square, the two pairs $\{p_a,p_c\}$ and $\{p_b,p_d\}$ interlace with probability $1/3$. Since there are at most two antipodal pairs of unresolved squares, the union bound gives an ordering for which no corresponding pair of pairs interlaces. Fix such an ordering $\pi$.

We now extend $F_\pi$ continuously to a map $f:|K|\to \mathbb R^2\setminus\{0\}$. On each resolved square, we extend affinely over the two triangles obtained by adding the chosen diagonal. As in Proposition~\ref{prop:existence-equivariant-map}, this extension avoids the origin. On an unresolved square with vertices $v_{00},v_{10},v_{01},v_{11}$, Lemma~\ref{lemma:4 point ordering} and the choice of $\pi$ imply that the convex hull of
\[
F(v_{00})=p_a-p_b,\quad
F(v_{10})=p_a-p_d,\quad
F(v_{01})=p_c-p_b,\quad
F(v_{11})=p_c-p_d
\]
avoids the origin. We may therefore extend over this square, for instance by bilinear interpolation. Defining the extension on antipodal squares antipodally gives an equivariant map.

After normalising, we obtain a $\mathbb Z_2$-equivariant map $g:|K|\to S^1$. This contradicts Lemma~\ref{lem:no-equivariant-map}. Therefore a monochromatic path connecting an antipodal pair must exist.
\end{proof}
We finish the proof of Theorem~\ref{quantitative_feder_subi} with the following averaging lemma.
\begin{lemma}\label{lem:avg}
Let $n\ge3$, let \(U\) be a centrally symmetric set of square faces of \(Q_n\), and let \(G=\mathbb F_2^n\rtimes S_n\) be the group of cube automorphisms generated by translations and coordinate permutations. If \(|U|<3\cdot2^{n-2}\), then there exists $g\in G$ such that $|gE_n^{(2)}\cap U|\le 4$.
\end{lemma}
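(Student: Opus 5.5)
The plan is a first-moment (averaging) argument over the group \(G\). Pick \(g\in G\) uniformly at random and compute the expected value of \(|gE_n^{(2)}\cap U|\). By linearity of expectation this expectation equals \(\sum_{S\in U}\Pr[S\in gE_n^{(2)}]\). The group \(G=\mathbb F_2^n\rtimes S_n\) acts transitively on the square faces of \(Q_n\). Hence, for any fixed square \(S\), the random square \(g^{-1}S\) is uniformly distributed among all \(N=\binom n2 2^{n-2}\) square faces. Therefore \(\Pr[S\in gE_n^{(2)}]=s/N\), where \(s\) is the number of square faces of \(E_n\).

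For \(s\), I would use \cref{lem:En}. Its proof shows that \(D_n\cap\tau(D_n)=\partial D_n\), which is a cycle and contains no squares. So \(E_n\) has exactly \(s=2\binom n2=n(n-1)\) square faces; the upper bound \(s\le n(n-1)\) recorded there is all we need in any case. Consequently
\[
\mathbb E\,|gE_n^{(2)}\cap U|=\frac{|U|\,n(n-1)}{\binom n2 2^{n-2}}=\frac{2|U|}{2^{n-2}}<\frac{2\cdot 3\cdot 2^{n-2}}{2^{n-2}}=6 .
\]
Thus some \(g\) attains an intersection of size at most \(5\).

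To sharpen this to \(\le 4\), I would use central symmetry. Every element of \(G\) commutes with \(\tau\): translations and coordinate permutations both commute with complementation. So \(gE_n\) is \(\tau\)-invariant, and \(U\) is \(\tau\)-invariant by hypothesis. Moreover, for \(n\ge3\) no square is \(\tau\)-fixed. Hence \(gE_n^{(2)}\cap U\) is a union of antipodal pairs of distinct squares, and its size is even. An even integer that is at most \(5\) is at most \(4\), which gives the claim.

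The main point requiring care is transitivity of \(G\) on squares, together with the exact probability computation. A square is determined by a pair \(\{i,j\}\) of free coordinates and a fixed pattern on the remaining \(n-2\) coordinates. Permutations move any pair to any pair, and translations then adjust the fixed pattern. So the stabiliser has constant size and the uniform-distribution claim holds. The strict inequality \(|U|<3\cdot 2^{n-2}\) is exactly what produces the strict bound \(<6\) on the expectation. With at most two antipodal pairs unresolved in \(gE_n\), \cref{prop:2-unresolved} applied to \(K=gE_n\) then yields \cref{quantitative_feder_subi}.
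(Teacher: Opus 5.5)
Your proposal is correct and follows the paper's proof essentially step for step. You take a uniformly random $g\in G$ and use transitivity on squares to get $\Pr[S\in gE_n^{(2)}]\le 2^{3-n}$, so the expectation is $<6$. The parity argument, using $\tau$-invariance of both sets and the absence of $\tau$-fixed squares for $n\ge 3$, then improves the bound to $\le 4$. Your extra remark that $E_n$ has exactly $n(n-1)$ squares is true, but as you note only the upper bound is used.
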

\begin{proof}

Every element of \(G\) commutes with the antipodal map \(\tau\), and \(G\) acts transitively on the square faces of \(Q_n\).
Choose \(g\in G\) uniformly at random. Since \(E_n\) is centrally symmetric and simply connected, so is \(gE_n\). For a fixed square face \(S\) of \(Q_n\), we have
\[
\Pr(S\in gE_n^{(2)})=
\frac{|E_n^{(2)}|}{\#\{\text{square faces of }Q_n\}},
\]
where \(E_n^{(2)}\) denotes the set of square faces of \(E_n\). The number of square faces of \(Q_n\) is
\(
\binom n2 2^{n-2},
\)
since one chooses the two varying coordinates and then fixes the remaining \(n-2\) coordinates. Since \(|E_n^{(2)}|\le 2\binom n2\), it follows that
\[
\Pr(S\in gE_n^{(2)})\le
\frac{2\binom n2}{\binom n2 2^{n-2}}
=
2^{3-n}.
\]

By the linearity of expectation,
\[
\mathbb E |U\cap gE_n^{(2)}|
=
\sum_{S\in U}\Pr(S\in gE_n^{(2)})
\le
|U|\cdot2^{3-n}
<6.
\]
The set \(U\) is invariant under the antipodal map by assumption, and \(gE_n^{(2)}\) is antipodally invariant, since $g$ commutes with $\tau$. Since \(n\ge 3\), no square face of \(Q_n\) is fixed by the antipodal map. Hence, \(U\cap gE_n^{(2)}\) is a disjoint union of antipodal pairs of squares, and therefore
\[
|U\cap gE_n^{(2)}|\in\{0,2,4,6,\dots\}.
\]
Since the expectation is less than \(6\), there exists \(g\in G\) such that
\(
|U\cap gE_n^{(2)}|\leq 4.
\)
\end{proof}

\begin{proof}[Proof of \cref{quantitative_feder_subi}]
Let \(U\) be the set of unresolved square faces of \(Q_n\). This set is centrally symmetric. By Lemma~\ref{lem:avg}, there exists \(g\in G\) such that
\(
|U\cap gE_n^{(2)}|\le 4.
\)

Since this intersection is centrally symmetric and no square face is fixed by the antipodal map for \(n\ge3\), the complex \(gE_n\) contains at most two antipodal pairs of unresolved squares. By Lemma~\ref{lem:En}, \(gE_n\) is centrally symmetric and simply connected. Proposition~\ref{prop:2-unresolved} therefore gives a monochromatic path connecting an antipodal pair in \(Q_n\).
\end{proof}

\section{Concluding remarks}\label{sec:remarks}

We have shown that unresolved square faces form the local obstruction to the
topological criterion. In particular, any counterexample to Norine's conjecture
must contain at least \(3\cdot 2^{n-2}\) unresolved square faces.

The proof of \cref{quantitative_feder_subi} uses only the size of the set of
unresolved squares, through averaging over copies of the test complex \(E_n\).
This is unlikely to be the whole story. The square faces of a hypercube satisfy
many higher-dimensional relations: for example, in every \(3\)-dimensional
subcube, the six square faces form the boundary of a cube. Thus one expects
that unresolved squares in a genuine counterexample cannot be placed
arbitrarily, but must obey additional closure or parity-type constraints coming
from these local cube boundaries.

It would be interesting to make this structural obstruction precise. One
possible direction is to use several test complexes at once, or to modify the
test complex inside higher-dimensional subcubes, in order to detect not only
the number of unresolved squares but also their arrangement.

\subsection*{Acknowledgements}
I would like to thank V\'it Jel\'inek, Jan Kyn\v{c}l, and Pavel Valtr for introducing me to this problem during the Seminar on combinatorial problems at Charles University. I thank the other participants, in particular Matou\v{s} \v{S}afr\'anek, for early discussions. I also thank Mykhaylo Tyomkyn for his helpful comments on the manuscript.
 	\bibliographystyle{plainnat}
	\bibliography{Hypercube-ref}

\end{document}